\documentclass{ifacconf}

\newcommand{\R}{{\mathbb R}}

\newcommand{\exb}[1]{{\mathbb E}\left[ #1 \right]}   

\newcommand{\rmd}{{\rm d}}

\newcommand{\tr}{\mathop{\rm Trace}\nolimits}

\newcommand{\Rd}{\R ^{d}}

\newcommand{\calC}{{\mathcal C}}

\newcommand{\calM}{{\mathcal M}}
\newcommand{\calN}{{\mathcal N}}

\newcommand{\bbE}{{\mathbb E}}

\newcommand{\KL}[2]{D_{\mathrm{KL}}(#1 \parallel #2)}

\newcommand{\subj}{\mathrm{subject \,to}}

\usepackage{todonotes}
\usepackage{graphicx}      
\usepackage{natbib}      
\usepackage{mathtools}
\usepackage{xcolor}
\usepackage{enumitem}
\usepackage{amsmath}
 \usepackage{amssymb}
 \usepackage{amsfonts}
 \usepackage{algorithm}
\usepackage{algpseudocode}
\usepackage{tabularray}

\makeatletter
\newcommand{\firstpageheadline}[1]{%
  \gdef\@firstpageheadline{#1}%
}
\gdef\@firstpageheadline{}

\let\old@logohead\@logohead
\def\@logohead{%
\bgroup
\footnotesize
\savebox{\@tempboxa}{\vtop {%
\hbox to \textwidth{\hfill \@firstpageheadline \hfill}%
\hbox to \textwidth{}%
\hbox to \textwidth{}%
\hbox to \textwidth{}%
\hbox to \textwidth{}%
}}%
\box\@tempboxa
\egroup
}
\makeatother

\begin{document}
\firstpageheadline{%
\parbox{0.95\textwidth}{\centering\large\bfseries
To appear in the Proceedings of MTNS 2026 (extended abstracts).\\
Submitted on February 15, 2026; accepted on April 20, 2026.
}%
}
\begin{frontmatter}

\title{Unbalanced Optimal Transport and Density Control for Discrete-Time Linear Systems}

\thanks[footnoteinfo]{This work was supported in part by Moonshot R\&D Grant Number JPMJMS2021 and by JSPS KAKENHI under Grant Number JP21H04875.}

\author[First]{Haruto Nakashima} 
\author[Second]{Siddhartha Ganguly} 
\author[Third]{Kenji Kashima}

\address[First]{Kyoto University, 
   Kyoto, Sakyo-Ward Yoshida Honmach Japan (e-mail: nakashima.haruto.72v@st.kyoto-u.ac.jp).}
\address[Second]{Georgia Institute of Technology, 
   Daniel Guggenheim School of Aerospace Engineering, 270 Ferst Dr., Atlanta, USA (e-mail: sganguly41@gatech.edu).}
\address[Third]{Kyoto University, 
   Kyoto, Sakyo-Ward Yoshida Honmach Japan (e-mail: kk@kyoto-u.ac.jp).}

\begin{abstract}                
This article studies unbalanced optimal transport (UOT) and its dynamical extension, unbalanced density control (UDC), for a class of constrained discrete-time linear systems. UOT compares measures with unequal total mass by balancing transport cost and fidelity to reference measures, while UDC incorporates system dynamics and constraints into this framework. Focusing on Gaussian references and discrete-time linear systems, we show that both problems admit globally optimal convex formulations, analogous to covariance steering. A numerical experiment is provided to illustrate our approach.
\end{abstract}

\begin{keyword}
Density Control, Optimal Transport, Unbalanced Optimal Transport
\end{keyword}

\end{frontmatter}

\section{Introduction}\label{section:intro}

Optimal transport (OT) measures the discrepancy between two distributions (more generally, measures) via the minimum cost of transporting mass, and has been widely used in machine learning, control, and related fields; see, e.g., \cite{Peyre25,Chen2021}.
However, classical OT requires equal total mass and thus cannot directly handle mismatched mass caused by missing data or dissipation. Unbalanced optimal transport (UOT) relaxes this restriction by penalizing deviations from given reference measures (e.g., by KL divergence), enabling comparison of nonnegative measures with different masses (\cite{Sejourne2023},~\cite{Payre2018},~\cite{Wang2024}).

As standard OT theory is translated in the setting where distribution evolves according to underlying dynamics (often discussed under covariance steering; see \cite{Liu2025,Nakashima25,Morimoto2024}), we study \emph{unbalanced density control} (UDC) as a discrete-time linear-systems extension of UOT problem. 
We focus on Gaussian reference measures and show that both UOT and UDC admit globally optimal solutions via convex optimization. UOT reduces to finite-dimensional optimization over mean and covariance with closed-form mass computation, and UDC admits an SDP formulation for fixed mass, combined with an analogous closed-form mass update.

\noindent\textbf{Contributions:}
\begin{itemize}
    \item We derive a finite-dimensional convex formulation and a globally optimal algorithm for Gaussian UOT.
    \item We formulate Gaussian UDC for discrete-time linear systems and propose a globally optimal SDP-based algorithm.
\end{itemize}
A \emph{significantly expanded version} of this paper, containing additional theoretical results, complete proofs, and numerical experiments, is available in~\cite{ref:UnOTDenCon:Extended:TAC}.


\section{Preliminary}\label{section:Preliminary}
We denote as \(\R\) the set of real numbers. Let \(p \ge 1\); over \(\Rd\), we denote the set of nonnegative finite measures with finite $p$-th moments by $\calM_p^{+}(\Rd)$. For $f \in \calM_p^{+}(\Rd)$, we call
$\int \rmd f$ the \emph{mass} of $f$. The space of integrable, nonnegative functions over \(\Rd\) will be denoted by \(L^{1}_{+}(\Rd)\). We denote by $\calN(m,\Sigma)$ the Gaussian probability measure with mean $m$ and covariance $\Sigma \succeq O$ (the null matrix). For a nonnegative scalar $c>0$, a measure of the form
\[
c\,\calN(m,\Sigma)
\]
is called a \emph{Gaussian measure}. For absolutely continuous measures $\alpha, \beta \in  \calM_p^{+}(X)$ 
the standard Kullback--Leibler (KL) divergence is defined by:
\begin{equation*}\label{eq:KL_def}
    \KL{\alpha}{\beta} \coloneqq \int_{\mathbb{R}^d} \log \left(\frac{\rmd\alpha}{\rmd\beta}(x)\right) \rmd\alpha(x) - \int_{\Rd} \rmd\beta + \int_{\Rd} \rmd\alpha .
\end{equation*}

\section{Problem Setup}\label{sec:problem}
The first component of our UDC framework is an \emph{unbalanced} variant of the classical optimal transport problem, which we briefly review below.
\subsection{Unbalanced Optimal Transport Problem}\label{subsec:uot}
Fix a positive integer \(d\). Given Gaussian measures
\[
\alpha = c_\alpha \calN(m_\alpha,\Sigma_\alpha), \qquad
\beta  = c_\beta  \calN(m_\beta,\Sigma_\beta),
\]
where \(c_{\alpha},c_{\beta}>0\), \(m_{\alpha},m_{\beta} \in \Rd\), and \(\Sigma_{\alpha},\,\Sigma_{\beta}\succ O\). A transport plan (or coupling) between \(\alpha\) and \(\beta\) will be any nonnegative finite measure \(\pi \in \calM_2^{+}(\Rd \times \Rd)\) which, for simplicity, we assume to be absolutely continuous i.e., we focus on couplings of the form \(\pi(\rmd x_1, \rmd x_2) = \pi(x_1,x_2)\,\rmd x_1 \rmd x_2,\) with \(\pi\in L^1_+(\mathbb R^{2d})\) and having finite second moment. Consequently, we have the marginal conditions
\[
\pi_1(x_1) \coloneqq \int_{\Rd} \pi(x_1,x_2)\,\rmd x_2, \quad \pi_2(x_2) \coloneqq \int_{\Rd} \pi(x_1,x_2)\,\rmd x_1.
\]
Let \(\gamma>0\) and the ground cost \(\ell(x_1,x_2)\coloneqq \|x_2-x_1\|^2\); with these ingredients, we consider the unbalanced optimal transport problem:
\begin{equation}\label{eq:uot}
\begin{aligned}
\inf_{\pi} 
\int_{\Rd}\int_{\Rd} & \|x_2-x_1\|^2 \,\rmd \pi(x_1,x_2) \\
&+ \gamma \KL{\pi_1}{\alpha}
+ \gamma \KL{\pi_2}{\beta}.
\end{aligned}
\end{equation}
Note that this is equivalent to the following formulation where the marginals are treated as explicit decision variables:
\begin{equation}\label{eq:uot_marginals}
\begin{aligned}
\inf_{\pi,\pi_1,\pi_2}\quad &
\int_{\Rd}\int_{\Rd}  \|x_2-x_1\|^2 \,\rmd \pi(x_1,x_2) \\
& + \gamma \KL{\pi_1}{\alpha} + \gamma \KL{\pi_2}{\beta}\\
\subj\quad & \pi_1(x_1) = \int_{\Rd} \pi(x_1,x_2)\,\rmd x_2,\\
& \pi_2(x_2) = \int_{\Rd} \pi(x_1,x_2)\,\rmd x_1 .
\end{aligned}
\end{equation}
The measure $\pi$ is also referred to as a \emph{transport plan}.

\subsection{Unbalanced Density Control Problem}\label{subsection:udc}
 We consider the case where the distribution evolves according to dynamics in~\eqref{eq:uot}.
In this paper, we study the following optimal density control problem with a quadratic control cost and soft endpoint constraints via KL divergence:
\begin{equation}\label{eq:udc}
    \begin{aligned}
        \inf_{\pi_1,\,\{U_k\}_{k=1}^{T-1}} \quad &\exb{\sum_{k=1}^{T-1}\|u_k\|^2} + \gamma \KL{\pi_1}{\alpha} \\
        & \qquad\qquad\qquad\,\,+ \gamma \KL{\pi_T}{\beta}\\
        \subj\quad & x_{k+1} = A x_k + B u_k,\\
        & x_k \sim \pi_k,\qquad k=1,2,\ldots,T-1,\\
        & u_k \sim U_k(\cdot\,|\,x)\ \text{given } x_k=x .
    \end{aligned}
\end{equation}
Here, the expectation in the objective is understood as:
\[
\exb{\sum_{k=1}^{T-1}\|u_k\|^2} =
\sum_{k=1}^{T-1}\int_{\Rd}\!\int_{\Rd} \|u\|^2 \,\rmd U_k(u|x)\,\rmd \pi_k(x),
\]
and we emphasize that $\pi_k$ is not necessarily a probability measure.

\section{Global Solution Method to UOT}\label{section:uot-solution}

\subsection{Restriction to Gaussian marginals}
\label{subsection:uot-gaussian-restriction}

In this section, we prove that there exists an optimal solution whose marginals \(\pi_1,\,\pi_2\) are Gaussian measures.
Let
\begin{align*}
    J(\pi):= \int_{\Rd}\int_{\Rd} &\|x_2-x_1\|^2 \,\rmd \pi(x_1,x_2) \\
    &+ \gamma \KL{\pi_1}{\alpha} + \gamma \KL{\pi_2}{\beta}.
\end{align*}

We first recall the following lower bound for the quadratic transport cost.

\begin{prop}[Transport cost lower bound]\label{prop:transport-lb}
Let $\pi_1,\pi_2\in\calM_2^{+}(\Rd)$  with the same mass $c_1>0$.
Let the mean and covariance of the normalized measures $\pi_i/c_1$ be $m_i$ and $\Sigma_i$, respectively.
Define \(\calC\) as follows:
\begin{equation*}
    \calC:=\left\{
        \begin{aligned}
            \pi:&\int \rmd\pi =c_1, \\
            &\frac{\pi_{i}}{c_{1}}\mbox{ has mean }m_i, \mbox{covariance }\Sigma_{i}, \\
            &\mbox{for } i=1,2.
        \end{aligned} \right\}
\end{equation*}
Then, it holds that:
\begin{equation}\label{eq:gelbrich}
    \begin{aligned}
        &\inf_{\pi\in\calC} \int_{\Rd}\int_{\Rd} \|x_2-x_1\|^2 \,\rmd \pi(x_1,x_2) \\ 
        &\ge c_1\Big(\|m_2-m_1\|^2 \\
        &\quad+\tr\left(\Sigma_1+\Sigma_2-2(\Sigma_1^{1/2}\Sigma_2\Sigma_1^{1/2})^{1/2}\right)\Big).
    \end{aligned}
\end{equation}
Moreover, if $\pi_1$ and $\pi_2$ are Gaussian measures, equality holds and an optimal coupling is induced by an affine map:
\begin{align*}\label{eq:uot-affine-map}
    &x_2 = T x_1 + t, \quad T = \Sigma_1^{-1/2}(\Sigma_1^{1/2}\Sigma_2\Sigma_1^{1/2})^{1/2}\Sigma_1^{-1/2},\\
    &t = m_2 - Tm_1.
\end{align*}
\end{prop}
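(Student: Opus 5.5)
The plan is to reduce the statement to the classical Gelbrich bound for probability measures by normalization. Any $\pi\in\calC$ has mass $c_1$, so $\tilde\pi:=\pi/c_1$ is a probability measure on $\Rd\times\Rd$. Its marginals $\pi_i/c_1$ have means $m_i$ and covariances $\Sigma_i$. The transport cost is linear in $\pi$, so
\[
\int\!\!\int \|x_2-x_1\|^2\,\rmd\pi = c_1\,\bbE_{\tilde\pi}\|x_2-x_1\|^2 .
\]
It therefore suffices to bound $\bbE\|X_2-X_1\|^2$ from below for a random pair $(X_1,X_2)$ with the prescribed first and second moments.

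\textbf{The lower bound.} Expand
\[
\bbE\|X_2-X_1\|^2=\|m_2-m_1\|^2+\tr(\Sigma_1+\Sigma_2-2S),
\]
where $S=\bbE[(X_1-m_1)(X_2-m_2)^\top]$ is the cross-covariance. The joint covariance $\begin{pmatrix}\Sigma_1&S\\ S^\top&\Sigma_2\end{pmatrix}$ must be positive semidefinite. The bound thus reduces to the matrix inequality
\[
\max\{\tr S:\ \begin{pmatrix}\Sigma_1&S\\ S^\top&\Sigma_2\end{pmatrix}\succeq O\}=\tr\big((\Sigma_1^{1/2}\Sigma_2\Sigma_1^{1/2})^{1/2}\big).
\]
This is the main step. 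Assuming $\Sigma_1\succ O$, use a Schur complement: PSD-ness is equivalent to $S=\Sigma_1^{1/2}K\Sigma_2^{1/2}$ for some contraction $K$ with $\|K\|\le1$. Then $\tr S=\tr(K\,\Sigma_2^{1/2}\Sigma_1^{1/2})$. By von Neumann's trace inequality this is at most the nuclear norm $\|\Sigma_2^{1/2}\Sigma_1^{1/2}\|_*$. That nuclear norm equals $\tr\big((\Sigma_1^{1/2}\Sigma_2\Sigma_1^{1/2})^{1/2}\big)$, since the singular values of $\Sigma_2^{1/2}\Sigma_1^{1/2}$ are the square roots of the eigenvalues of $\Sigma_1^{1/2}\Sigma_2\Sigma_1^{1/2}$. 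The singular case $\Sigma_1\succeq O$ follows by a continuity or perturbation argument. Alternatively, one can simply cite Gelbrich's theorem directly. Multiplying by $c_1$ gives \eqref{eq:gelbrich}.

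\textbf{Equality in the Gaussian case.} Let $X_1\sim\calN(m_1,\Sigma_1)$ and set $X_2=TX_1+t$ with $T$ and $t$ as in the statement.
\begin{itemize}
\item Since $T\succ O$ is symmetric and $T\Sigma_1T=\Sigma_2$ (a direct computation using $(\Sigma_1^{1/2}\Sigma_2\Sigma_1^{1/2})^{1/2}$ squared), $X_2$ is Gaussian with mean $m_2$ and covariance $\Sigma_2$.
\item The cross-covariance is $S=\Sigma_1T=\Sigma_1^{1/2}(\Sigma_1^{1/2}\Sigma_2\Sigma_1^{1/2})^{1/2}\Sigma_1^{-1/2}$. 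Its trace is $\tr((\Sigma_1^{1/2}\Sigma_2\Sigma_1^{1/2})^{1/2})$, so it attains the bound.
\end{itemize}
Let $\pi$ be $c_1$ times the law of $(X_1,X_2)$. This $\pi$ lies in $\calC$, its marginals are exactly $\pi_1$ and $\pi_2$, and it achieves the lower bound, so it is optimal.

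One technicality: the paper restricts couplings to absolutely continuous ones, whereas this deterministic coupling is supported on a graph. I would remark that equality holds for the infimum by approximation. For example, use the Gaussian couplings with cross-covariance $(1-\epsilon)S$, which are absolutely continuous, and let $\epsilon\to0$. The optimum is then attained in the closure, and the affine map is described as inducing the optimal coupling.
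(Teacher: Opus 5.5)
Your proposal is correct and matches what the paper relies on: the paper does not prove this proposition but ``recalls'' it as the classical Gelbrich bound rescaled by the mass $c_1$. Your normalization to probability measures, the Schur-complement/von Neumann bound on $\tr S$, and the explicit Gaussian affine coupling with $S=\Sigma_1 T$ are the standard proof of that bound. Your remark about the paper's standing absolute-continuity assumption on couplings is a valid refinement the paper glosses over: the affine coupling is not absolutely continuous, so under that assumption the bound is approached (e.g., via the absolutely continuous Gaussian couplings with cross-covariance $(1-\epsilon)\Sigma_1 T$) rather than attained.
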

This proposition states that under fixed mean and covariance, ~\eqref{eq:gelbrich} gives minimal cost for transportation, and the equality holds when the marginal \(\pi_1\) and \(\pi_2\) are Gaussian.
Similarly to~\eqref{prop:gauss-min-kl}, the following proposition formalizes the optimality of Gaussian measure for KL term in \(J(\pi)\).

\begin{prop}[Gaussian minimizes KL]
\label{prop:gauss-min-kl}
Fix $c>0$, $m,m'\in\Rd$, and $\Sigma,\Sigma'\in\mathbb{R}^{d \times d}$ with $\Sigma \succeq O$ and $\Sigma'\succ O$.
For all measures $\pi$ having mass $c$ and whose normalized measure $\pi/c$ has mean $m$ and covariance $\Sigma$, it holds that:
\begin{align}
    \KL{\pi}{c'\calN(m',\Sigma')}\geq\KL{c\calN(m,\Sigma)}{c'\calN(m',\Sigma')}
\end{align}
namely, the Gaussian measure $c\,\calN(m,\Sigma)$ minimizes  \\
$\KL{\cdot}{c'\calN(m',\Sigma')}$ under fixed mass, mean, and covariance.
\end{prop}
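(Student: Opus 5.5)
The plan is to reduce the statement to the nonnegativity of a probability-level KL divergence. Write $\pi = c\,p$, where $p$ is a probability density with mean $m$ and covariance $\Sigma$. Write $\beta = c'q'$ with $q' = \calN(m',\Sigma')$, and let $g = \calN(m,\Sigma)$. If $\pi$ is not absolutely continuous with respect to $\beta$, the left-hand side is $+\infty$ and there is nothing to prove. We may also assume $\Sigma\succ O$: if $\Sigma$ is singular, no absolutely continuous $\pi$ has that covariance, and then both sides are infinite under the same convention. This degenerate case is worth a remark but needs no real work.

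First, separate out the mass terms. From the definition of the generalized KL divergence,
\[
\KL{\pi}{c'q'} = c\int \log\frac{p}{q'}\,p\,\rmd x + c\log\frac{c}{c'} - c' + c ,
\]
and the analogous identity holds for $c\,g$. The terms $c\log(c/c')-c'+c$ depend only on the masses, which are the same on both sides. So the claim is equivalent to
\[
\int p\log\frac{p}{q'} \;\ge\; \int g\log\frac{g}{q'}.
\]

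Next, use the fact that $\log q'$ is a quadratic polynomial in $x$. As a result, $\int p\log q'$ depends on $p$ only through its mean and covariance, so it equals $\int g\log q'$. Likewise $\log g$ is quadratic, which gives $\int p\log g = \int g\log g$. Combining these,
\[
\int p\log\frac{p}{q'} - \int g\log\frac{g}{q'} = \int p\log p - \int p \log g = \int p\log\frac{p}{g} \ge 0 .
\]
The final inequality is Gibbs' inequality for probability measures, which follows from Jensen's inequality applied to $-\log$. Equality holds exactly when $p=g$.

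The main obstacle is verifying that the moment-matching identities are legitimate. This requires finite second moments, which is part of the setting $\calM_2^+$, so all the integrals of quadratics are finite. It also requires that $\int p\log p$ is well defined. If the entropy term is $+\infty$, the left-hand side is $+\infty$ and the inequality is trivial. Otherwise, the rearrangement above is valid because each piece is finite: the Gaussian integrals are finite, and the entropy of $p$ is bounded above by that of $g$ given the second moments.
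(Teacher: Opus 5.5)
Your argument is correct. The paper gives no proof of this proposition: it is recalled as a standard fact, with complete proofs deferred to the extended version. What you wrote is the standard argument behind it. The mass terms depend only on $c$ and $c'$ and cancel, whatever their sign convention. (The paper's displayed KL definition and its $\phi_\nu$ actually disagree on the sign of $c-c'$, which is harmless here.) The cross terms $\int p\log q'\,\rmd x$ and $\int p\log g\,\rmd x$ depend only on the mean and covariance, because both log-densities are quadratic. What remains is Gibbs' inequality $\int p\log(p/g)\,\rmd x\ge 0$, i.e.\ the maximum-entropy property of the Gaussian.

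One step should be tightened. To show that $\int p\log p\,\rmd x$ is well defined and finite, you appeal to ``the entropy of $p$ is bounded above by that of $g$.'' Once the moments match, that bound is equivalent to the inequality you are proving, so as written the step leans on the conclusion. Use instead the pointwise bound $p\log(p/g)\ge p-g$, which follows from $\log t\ge 1-1/t$ with $0\log 0=0$. It makes the negative part of $p\log(p/g)$ integrable, and integrating it gives Gibbs' inequality directly, since $\int(p-g)\,\rmd x=0$. Because $p\log g$ is integrable by the finite second moments, you then get $\int p\log p\,\rmd x=\int p\log(p/g)\,\rmd x+\int p\log g\,\rmd x\ge\int p\log g\,\rmd x>-\infty$. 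With that, every rearrangement in your proof is legitimate in $(-\infty,+\infty]$.
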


Combining Propositions~\ref{prop:transport-lb} and~\ref{prop:gauss-min-kl}, we obtain the following key reduction.

\begin{thm}[Optimality of Gaussian.]
\label{thm:gaussian-optimality}
Let $(\pi_1,\pi_2,\pi)$ be any feasible solution to~\eqref{eq:uot_marginals} with common mass $c>0$.
Let $(m_i,\Sigma_i)$ be the mean and covariance of $\pi_i/c$ $(i=1,2)$, and define Gaussian measures:
\begin{align}
    \pi_i^{\rm G} := c\,\calN(m_i,\Sigma_i), \qquad i=1,2.
\end{align}
If $\alpha$ and $\beta$ are Gaussian measures, then there exists a feasible Gaussian solution $\pi^{\rm G}$
(with marginals $\pi_1^{\rm G},\pi_2^{\rm G}$ and an affine optimal coupling) such that
\begin{align}
    J(\pi^{\rm G}) \le J(\pi).
\end{align}
\end{thm}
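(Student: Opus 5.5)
The plan is to split $J(\pi)$ into the transport term and the two KL terms. We then show that passing to $\pi^{\rm G}$ does not increase either piece. The common mass $c$ and the first two moments of each marginal are kept fixed throughout, so the two earlier propositions apply directly. Note that feasibility of $(\pi_1,\pi_2,\pi)$ with finite objective forces $\pi_1,\pi_2$ to have the same mass $c=\int\rmd\pi$ and finite second moments. Hence the means $m_i$ and covariances $\Sigma_i$ of $\pi_i/c$ are well defined, and $\pi\in\calC$ with $c_1=c$.

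\emph{Transport term.} By Proposition~\ref{prop:transport-lb}, the cost of $\pi$ satisfies
\begin{align*}
\int\!\!\int \|x_2-x_1\|^2\,\rmd\pi \ge c\Big(\|m_2-m_1\|^2+\tr\big(\Sigma_1+\Sigma_2-2(\Sigma_1^{1/2}\Sigma_2\Sigma_1^{1/2})^{1/2}\big)\Big).
\end{align*}
We define $\pi^{\rm G}$ as the pushforward of $\pi_1^{\rm G}=c\,\calN(m_1,\Sigma_1)$ under $x_1\mapsto (x_1,Tx_1+t)$, with $T,t$ as in Proposition~\ref{prop:transport-lb}. Its first marginal is $\pi_1^{\rm G}$. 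Its second marginal is $c\,\calN(Tm_1+t,\,T\Sigma_1T^\top)=c\,\calN(m_2,\Sigma_2)=\pi_2^{\rm G}$, since $T\Sigma_1T=\Sigma_2$ by direct computation. The equality part of Proposition~\ref{prop:transport-lb} then says that the cost of $\pi^{\rm G}$ equals the right-hand side above. So the transport term does not increase.

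\emph{KL terms.} Proposition~\ref{prop:gauss-min-kl}, applied with $c'\calN(m',\Sigma')=\alpha$ and then with $c'\calN(m',\Sigma')=\beta$, gives $\KL{\pi_1}{\alpha}\ge\KL{\pi_1^{\rm G}}{\alpha}$ and $\KL{\pi_2}{\beta}\ge\KL{\pi_2^{\rm G}}{\beta}$. The mass terms $-\int\rmd\beta+\int\rmd\pi_i$ in the definition of the divergence are identical on both sides, because the masses coincide. Adding the three inequalities, with $\gamma>0$, yields $J(\pi^{\rm G})\le J(\pi)$. The triple $(\pi_1^{\rm G},\pi_2^{\rm G},\pi^{\rm G})$ is feasible by construction.

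\emph{Main obstacle.} The delicate step is the degenerate case where $\Sigma_1$ is singular, so $\Sigma_1^{-1/2}$ and hence $T$ are undefined. A related problem is that $\pi^{\rm G}$ is then not absolutely continuous, whereas the paper restricts to absolutely continuous couplings. However, $\pi_1$ is absolutely continuous, and this should force $\Sigma_1\succ O$; I would verify this first, since a density cannot be concentrated on a proper affine subspace. Similarly $\Sigma_2\succ O$, so $T\succ O$ is well defined.

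Even so, the map-induced coupling $\pi^{\rm G}$ is supported on a graph and has no density on $\R^{2d}$. I would handle this in one of two ways. One option is to note that the objective $J$ only involves the cost and the marginals, so the absolute continuity assumption on couplings can be relaxed without changing the value. The other is an approximation argument: replace $\pi^{\rm G}$ by the Gaussian coupling with covariance $\begin{pmatrix}\Sigma_1 & \Sigma_1T\\ T\Sigma_1 & \Sigma_2\end{pmatrix}+\epsilon I$, suitably adjusted to keep the marginals exact (e.g.\ by shrinking the cross-covariance), and let $\epsilon\to0$ to attain the bound in the infimum sense. I would state the theorem with this mild relaxation if needed.
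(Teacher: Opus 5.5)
Your argument is correct and follows the paper's route. The paper obtains the theorem by combining Proposition~\ref{prop:transport-lb} (the transport cost of $\pi$ is bounded below by the Gelbrich-type expression, which the affine-map coupling between $\pi_1^{\rm G}$ and $\pi_2^{\rm G}$ attains) with Proposition~\ref{prop:gauss-min-kl} applied to each KL term at fixed mass, mean and covariance.

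Your extra points — that absolute continuity of $\pi$ forces $\Sigma_1,\Sigma_2\succ O$, and that the map-induced coupling falls outside the absolutely continuous class — are ones the paper passes over silently. If you want to stay in that class, take the Gaussian measure of mass $c$ with the same joint mean and covariance as $\pi/c$: it is non-degenerate because $\pi$ has a density, it has marginals $\pi_1^{\rm G},\pi_2^{\rm G}$, and it has exactly the same transport cost as $\pi$. This gives $J\le J(\pi)$ without any limiting argument, at the price of not being the affine coupling.
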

As a result of Theorem~\ref{thm:gaussian-optimality}, the problem~\eqref{eq:uot} can be, without loss of optimality, reduced to optimization over the mass, mean, and covariance of marginals.

\subsection{Separation of the optimization}\label{subsection:uot-separation}
Using the closed-form expressions of the Wasserstein-2 distance and the KL divergence between Gaussian measures, the UOT problem~\eqref{eq:uot} is equivalent to the following finite-dimensional optimization:
\begin{align}\label{eq:finite-dim-uot}
    \inf_{c,m_1,\Sigma_1,m_2,\Sigma_2} \quad c\,M(m_1,m_2) + c\,C(\Sigma_1,\Sigma_2) + \psi(c),
\end{align}
where
\begin{align}\label{eq:def-of-psi}
    &M(m_1,m_2) := \|m_2-m_1\|^2  \nonumber\\
    &\qquad\qquad\qquad +\frac{\gamma}{2}(m_1-m_\alpha)^\top\Sigma_\alpha^{-1}(m_1-m_\alpha)  \nonumber\\
    &\qquad\qquad\qquad +\frac{\gamma}{2}(m_2-m_\beta )^\top\Sigma_\beta ^{-1}(m_2-m_\beta )   \nonumber\\
    &C(\Sigma_1,\Sigma_2) :=-2\tr\left(\sqrt{\Sigma_1^{1/2}\Sigma_2\Sigma_1^{1/2}}\right) \nonumber\\
    &\qquad\qquad\quad + \frac{\gamma}{2}\tr(\Sigma_\beta^{-1}\Sigma_{2}) -\frac{\gamma}{2}\log\det\Sigma_{2}+\tr\Sigma_{2}  \nonumber\\
    &\qquad\qquad\quad + \frac{\gamma}{2}\tr(\Sigma_\alpha^{-1}\Sigma_1)-\frac{\gamma}{2}\log\det\Sigma_1+\tr\Sigma_{1} \nonumber\\
    &\psi(c):= \frac{c\gamma}{2}(\log\det\Sigma_{\alpha} + \log\det\Sigma_{\beta} -2d)   \nonumber\\
    &\qquad\quad + \gamma\phi_{\alpha}(c) + \gamma\phi_{\beta}(c) \\
    &\phi_\nu(c):=c\log\!\frac{c}{c_\nu}-c+c_\nu, \quad \nu = \alpha,\ \beta\nonumber
\end{align}

Although~\eqref{eq:finite-dim-uot} is not jointly convex, Algorithm 1 solves the problem globally by exploiting its structure. The algorithm implement  optimization over $c$ and $(m_1,\Sigma_1,m_2,\Sigma_2)$ separately. To introduce the algorithm, define the convex subproblem that is solved in the algorithm:
\begin{align}\label{eq:uot-subproblem}
    \min_{m_1,\Sigma_1,m_2,\Sigma_2}\quad M(m_1,m_2) + C(\Sigma_1,\Sigma_2).
\end{align}

\begin{thm}[Global optimality of Algorithm~\ref{alg:uot}]\label{thm:uot-global}
Algorithm~\ref{alg:uot} yields a globally optimal solution to the UOT problem~\eqref{eq:uot}.
\end{thm}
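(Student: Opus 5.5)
Algorithm~\ref{alg:uot} is not displayed before the statement, so I take it to be the natural two-stage procedure suggested by the separation: first solve the convex subproblem~\eqref{eq:uot-subproblem} to obtain $(m_1^\star,\Sigma_1^\star,m_2^\star,\Sigma_2^\star)$ with optimal value $V^\star$; then minimize the scalar function $c\mapsto cV^\star+\psi(c)$ over $c>0$ in closed form; finally return $c^\star\calN(m_i^\star,\Sigma_i^\star)$ together with the affine coupling of Proposition~\ref{prop:transport-lb}.

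\textbf{Reduction to~\eqref{eq:finite-dim-uot}.} The case $c=0$ (the zero plan, with cost $\gamma(c_\alpha+c_\beta)$) is the limit $c\to0$ of the scalar problem below. For $c>0$, Theorem~\ref{thm:gaussian-optimality} shows that the infimum of~\eqref{eq:uot} equals the infimum over Gaussian marginals $\pi_i=c\,\calN(m_i,\Sigma_i)$ coupled by the affine map of Proposition~\ref{prop:transport-lb}. For such plans, $J$ equals the objective of~\eqref{eq:finite-dim-uot}. To check this, I would combine three closed forms:
\begin{itemize}
\item the Gelbrich/Wasserstein cost $c\bigl(\|m_2-m_1\|^2+\tr\Sigma_1+\tr\Sigma_2-2\tr(\Sigma_1^{1/2}\Sigma_2\Sigma_1^{1/2})^{1/2}\bigr)$;
\item the unbalanced KL between Gaussian measures,
\[
\KL{c\calN(m,\Sigma)}{c'\calN(m',\Sigma')}=c\,\KL{\calN(m,\Sigma)}{\calN(m',\Sigma')}+c\log\tfrac{c}{c'}-c+c';
\]
\item the standard Gaussian KL formula.
\end{itemize}
Collecting the terms multiplied by $c$ into $M$, into $C$, and into the constant $\tfrac{c\gamma}{2}(\log\det\Sigma_\alpha+\log\det\Sigma_\beta-2d)$ reproduces~\eqref{eq:def-of-psi}. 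This is routine bookkeeping.

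\textbf{Separation.} Because $c>0$ multiplies $M+C$, and $\psi$ does not depend on $(m_i,\Sigma_i)$,
\[
\inf_{c,m,\Sigma}\; c\,(M+C)+\psi(c)=\inf_{c>0}\; c\,V^\star+\psi(c),\qquad V^\star:=\inf(M+C).
\]
This holds since $\inf_x c f(x)=c\inf_x f(x)$ for $c>0$. So the joint nonconvexity is harmless: any global minimizer of the subproblem, paired with the optimal scalar $c$, is globally optimal. The scalar problem is strictly convex. Indeed, $\psi''(c)=\gamma\bigl(\tfrac1c+\tfrac1c\bigr)>0$, and setting the derivative $V^\star+\tfrac{\gamma}{2}(\log\det\Sigma_\alpha+\log\det\Sigma_\beta-2d)+\gamma\log\tfrac{c}{c_\alpha}+\gamma\log\tfrac{c}{c_\beta}=0$ gives a unique closed-form $c^\star=\sqrt{c_\alpha c_\beta}\exp(-\cdots)>0$. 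This also shows the boundary $c\to0$ is not optimal, since the derivative tends to $-\infty$ there.

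\textbf{Main obstacle: the subproblem.} I expect the hard part to be showing that~\eqref{eq:uot-subproblem} is convex and attains its minimum. $M$ is a convex quadratic. For $C$, I would first try the standard covariance-steering lift: $-2\tr(\Sigma_1^{1/2}\Sigma_2\Sigma_1^{1/2})^{1/2}=\min\{-2\tr S:\ \begin{bmatrix}\Sigma_1&S\\S^\top&\Sigma_2\end{bmatrix}\succeq O\}$, which is a jointly convex (SDP-representable) function. The remaining terms are linear in $\Sigma_i$ minus $\log\det$, hence convex. Attainment follows from coercivity: $-\log\det$ blows up as $\Sigma_i\to\partial$ of the PSD cone, the linear terms $\tfrac{\gamma}{2}\tr(\Sigma^{-1}_\cdot\Sigma_i)$ dominate the $-2\tr(\cdot)^{1/2}$ growth, which is only of order $\|\Sigma\|^{1/2}$, and $M$ is coercive in $(m_1,m_2)$. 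Hence $V^\star$ is attained and finite, and the algorithm's output attains the global infimum of~\eqref{eq:uot}.
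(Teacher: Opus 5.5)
Your proposal is correct and follows the paper's proof. Both rest on the identity $\inf_{c,m_i,\Sigma_i}\{c(M+C)+\psi(c)\}=\inf_{c>0}\{c\,p^\ast+\psi(c)\}$, followed by the first-order condition of the strictly convex scalar problem, which gives~\eqref{eq:c-star}; your treatment of the boundary $c=0$ and of convexity and attainment for~\eqref{eq:uot-subproblem} adds rigor the paper leaves implicit.

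One correction is needed in your attainment remark. When $\Sigma_1,\Sigma_2$ grow together, $\tr(\Sigma_1^{1/2}\Sigma_2\Sigma_1^{1/2})^{1/2}$ grows linearly, not like $\|\Sigma\|^{1/2}$, so for small $\gamma$ the $\tfrac{\gamma}{2}$-weighted terms cannot absorb it. Instead, use $\tr\Sigma_1+\tr\Sigma_2-2\tr(\Sigma_1^{1/2}\Sigma_2\Sigma_1^{1/2})^{1/2}\ge 0$, and then the coercivity of $\tfrac{\gamma}{2}\bigl(\tr(\Sigma_\alpha^{-1}\Sigma_1)-\log\det\Sigma_1\bigr)$ and its $\Sigma_2$ analogue.
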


\begin{pf}
Let $p^\ast$ denote the optimal value of~\eqref{eq:uot-subproblem}. Since
\begin{align}
&c\,M(m_1,m_2) + c\,C(\Sigma_1,\Sigma_2) + \psi(c) \\
= &c\big(M(m_1,m_2)+C(\Sigma_1,\Sigma_2)\big) + \psi(c),
\end{align}
we have
\begin{align}
\inf_{c,m_1,\Sigma_1,m_2,\Sigma_2}
\big\{c\,M + c\,C + \psi(c)\big\}
=
\inf_{c>0}\ \Big\{ c\,p^\ast + \psi(c)\Big\}.
\label{eq:separation}
\end{align}
The right-hand side is a one-dimensional convex optimization in $c$; hence the first-order optimality condition
gives the unique minimizer in closed form, which is exactly~\eqref{eq:c-star}. \qed
\end{pf}

Since the subproblem~\eqref{eq:uot-subproblem} is convex, it can be solved globally via convex programming solver. Using the result of it, Algorithm~\ref{alg:uot} computes the optimal mass \(c^*\) in the second step.


\begin{algorithm}[h]
\caption{UOT between Gaussians}
\label{alg:uot}
\begin{enumerate}
\item \textbf{Mean and covariance optimization (convex).}
Solve~\eqref{eq:uot-subproblem} \\
\(p^*\longleftarrow\) Optimal value of~\eqref{eq:uot-subproblem}
\item \textbf{Mass computation (closed form).}
\begin{align}\label{eq:c-star}
    c^\ast \longleftarrow &\sqrt{c_\alpha c_\beta} \exp\left(-\frac{p^\ast}{2\gamma} - \frac{L}{4} \right), \\
    &(L := \log\det(\Sigma_\alpha)+\log\det(\Sigma_\beta)-2d.) \nonumber
\end{align}
\end{enumerate}
\end{algorithm}

\begin{figure*}[h]\label{fig:optimal-plan}
  \vspace{-10mm}
  \hspace{15mm}
  \includegraphics[scale=0.27]{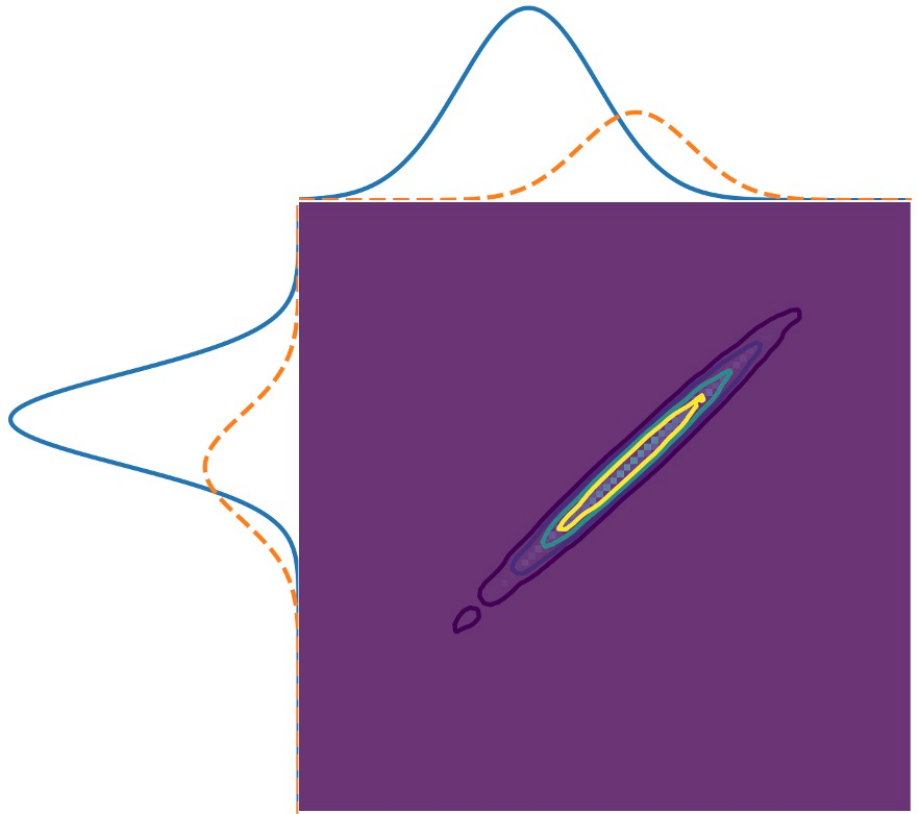}
  \hspace{5mm}
  \includegraphics[scale=0.27]{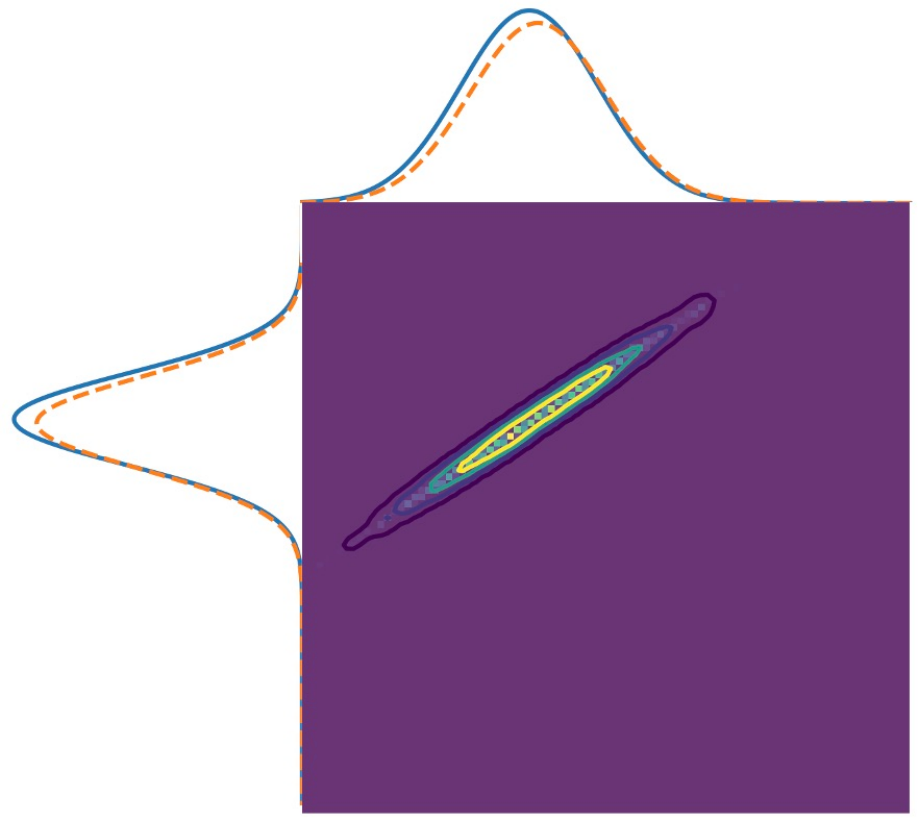}
  \hspace{5mm}
  \includegraphics[scale=0.25]{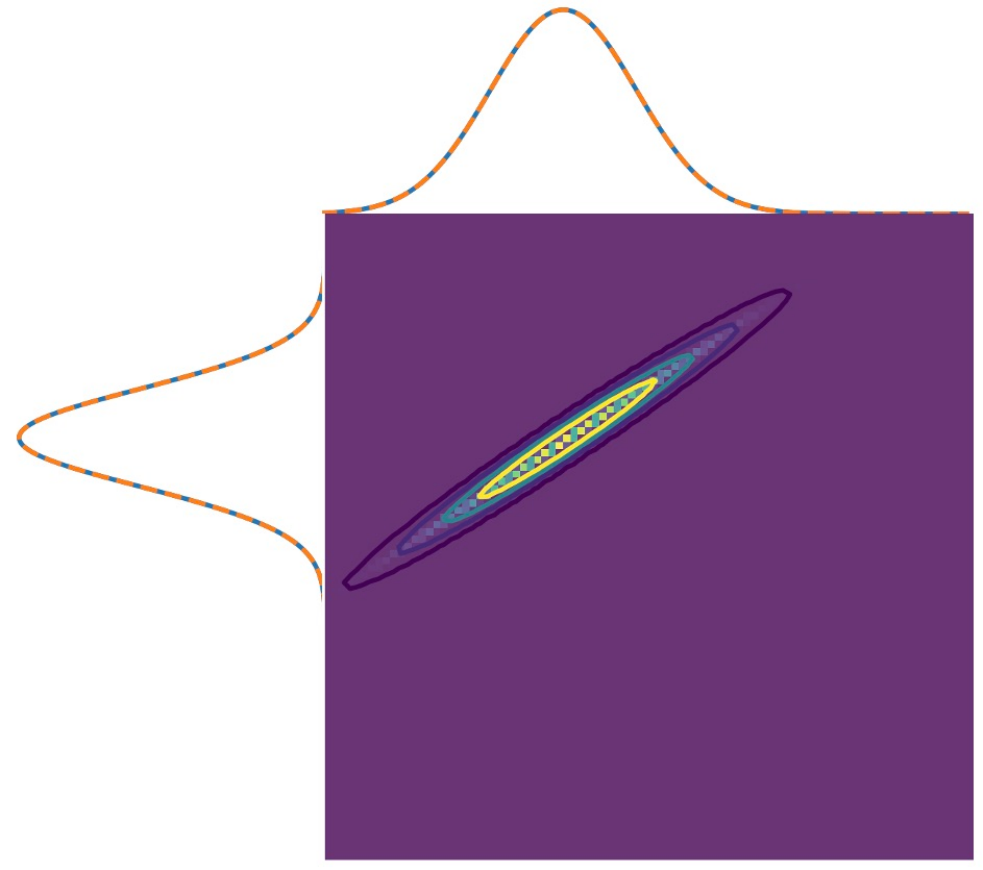}
  \caption{The optimal transport plans for the UOT problem with $\gamma=0.2$ (left) and $\gamma=30$ (middle), and for standard OT (right).}
\end{figure*}

\section{Global Solution Method to UDC}
\label{section:udc_solution}

In the previous section, we showed UOT problem---originally posed as a variational problem---can be transformed without loss of optimality into a finite-dimensional optimization over the first and second moments, and can be solved globally via convex optimization.
In this section, we establish an analogous result for the UDC problem \eqref{eq:udc}. 

\subsection{Restriction to Gaussian marginals}
\label{subsection:gaussian_affine_reduction}

Let the objective function of \eqref{eq:udc} be denoted by $J(\pi_1,\{U_k\})$.
Consider any feasible process of~\eqref{eq:udc} and let  $\{c\,p_k(x,u)\}$ be the corresponding measure, where each $p_k$ is a probability measure and $c>0$ denotes the mass. Namely, for given \(\pi_1,\,\{U_k\}\)
\begin{align*}
    c:= \int \rmd\pi_1,\quad
    p_k(\rmd x,\rmd u):= \frac{1}{c}U_k(\rmd u|x)\pi_t(\rmd x)
\end{align*}
Denote the first and second moments as follows.
\begin{align}\label{eq:moments_def}
    m_k &:= \bbE_{x_k\sim p_k}[x_k], \quad\Sigma_k := {\rm Cov}_{x_k\sim p_k}(x_k), \nonumber\\
    \bar u_k &:= \bbE_{u_k\sim p_k}[u_k], \quad \Sigma^u_k := {\rm Cov}_{u_k\sim p_k}(u_k),  \nonumber\\
    \Lambda_k &:= {\rm Cov}_{(x_k,u_k)\sim p_k}(u_k,x_k).
\end{align}

Using these moments, we can construct a Gaussian marginal and a Gaussian conditional control law that match the same first, second moments:
\begin{align}\label{eq:gaussian_affine_policy}
    \pi_1^{\rm G} &= c\,\calN(m_1,\Sigma_1), \nonumber\\
    U_k^{\rm G}(\cdot | x) &= \calN\!\Bigl(\bar u_k + \Lambda_k \Sigma_k^{-1}(x-m_k),\; \Sigma^u_k - \Lambda_k \Sigma_k^{-1}\Lambda_k^\top\Bigr).
\end{align}

\begin{thm}[Optimality of Gaussian]\label{thm:udc_gaussian_restriction}
    For any feasible solution to the UDC problem~\eqref{eq:udc} \(\pi_1,\,\{U_k\}\), define Gaussian control law as~\eqref{eq:gaussian_affine_policy}.
    Then it holds that 
    \[
    J\bigl(\pi_1,\{U_k\}\bigr)\ \ge\ J\bigl(\pi_1^{\rm G},\{U_k^{\rm G}\}\bigr).
    \]
\end{thm}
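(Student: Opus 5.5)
Our plan is to show that replacing $(\pi_1,\{U_k\})$ by the Gaussian surrogate~\eqref{eq:gaussian_affine_policy} preserves three things: the total mass $c$, all first and second moments of the joint state--input measures $c\,p_k$, and hence the quadratic control cost. At the same time, it can only decrease the two KL terms. We may assume $\Sigma_k\succ O$ for all $k$, since otherwise we replace $\Sigma_k^{-1}$ by a pseudo-inverse, or perturb and pass to the limit.

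\textbf{Step 1 (mass is invariant).} The dynamics $x_{k+1}=Ax_k+Bu_k$ push forward the measure $c\,p_k$ and do not create or destroy mass. Hence every $\pi_k$, and in particular $\pi_T$, has mass $c=\int\rmd\pi_1$. The same holds for the Gaussian surrogate, since $\pi_1^{\rm G}$ has mass $c$ and each $U_k^{\rm G}(\cdot|x)$ is a probability kernel.

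\textbf{Step 2 (moments are matched by induction on $k$).} Let $m_k^{\rm G},\Sigma_k^{\rm G}$ denote the normalized moments of the Gaussian process. At $k=1$ they equal $m_1,\Sigma_1$ by construction. Suppose $x_k^{\rm G}\sim\calN(m_k,\Sigma_k)$. Under $U_k^{\rm G}$ we have $u_k^{\rm G}=\bar u_k+\Lambda_k\Sigma_k^{-1}(x_k^{\rm G}-m_k)+w_k$, where $w_k$ is independent of $x_k^{\rm G}$ with covariance $\Sigma^u_k-\Lambda_k\Sigma_k^{-1}\Lambda_k^\top$. This covariance is positive semidefinite because it is the Schur complement of the joint covariance of $(x_k,u_k)$ under $p_k$. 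A direct computation then gives:
\begin{itemize}
\item $\bbE[u_k^{\rm G}]=\bar u_k$;
\item ${\rm Cov}(u_k^{\rm G},x_k^{\rm G})=\Lambda_k$;
\item ${\rm Cov}(u_k^{\rm G})=\Sigma^u_k$.
\end{itemize}
So $(x_k^{\rm G},u_k^{\rm G})$ is jointly Gaussian with the same mean and covariance as $(x_k,u_k)$ under $p_k$. Since $x_{k+1}$ is a linear function of $(x_k,u_k)$, its mean and covariance depend only on these joint moments. Hence $m_{k+1}^{\rm G}=m_{k+1}$, $\Sigma_{k+1}^{\rm G}=\Sigma_{k+1}$, and $x_{k+1}^{\rm G}$ is Gaussian, which closes the induction. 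In particular $\pi_T^{\rm G}=c\,\calN(m_T,\Sigma_T)$.

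\textbf{Step 3 (compare costs).} The control cost equals
\[
c\sum_{k=1}^{T-1}\bigl(\|\bar u_k\|^2+\tr\Sigma^u_k\bigr),
\]
which depends only on the mass and the matched moments, so the two processes have identical control cost. For the terminal penalty, $\pi_T$ has mass $c$ and normalized mean and covariance $(m_T,\Sigma_T)$. Since $\beta$ is a Gaussian measure, Proposition~\ref{prop:gauss-min-kl} gives $\KL{\pi_T}{\beta}\ge\KL{\pi_T^{\rm G}}{\beta}$. The same proposition applied to $\pi_1$ and $\alpha$ gives $\KL{\pi_1}{\alpha}\ge\KL{\pi_1^{\rm G}}{\alpha}$. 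Summing the three comparisons proves the claimed inequality.

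\textbf{Main obstacle.} We expect the delicate part to be Step 2. It needs the Schur complement $\Sigma^u_k-\Lambda_k\Sigma_k^{-1}\Lambda_k^\top$ to be well defined and positive semidefinite. It also needs the surrogate's state marginals to reproduce exactly the moments of the original, generally non-Gaussian, process at every time step, so that Proposition~\ref{prop:gauss-min-kl} applies at time $T$ with the same $(c,m_T,\Sigma_T)$. Degenerate $\Sigma_k$ would be handled with pseudo-inverses: if $v^\top\Sigma_kv=0$, then $\Lambda_kv=0$ by the Cauchy--Schwarz inequality, so the affine law is still well defined on the support.
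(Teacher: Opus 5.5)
Your proposal is correct and follows essentially the route the paper indicates. By induction on $k$, the Gaussian--affine law~\eqref{eq:gaussian_affine_policy} preserves the mass and all first and second moments of $(x_k,u_k)$; hence the quadratic control cost is unchanged and $\pi_T^{\rm G}=c\,\calN(m_T,\Sigma_T)$, after which Proposition~\ref{prop:gauss-min-kl} applied to both KL terms gives the inequality (the paper itself defers the full proof to the extended version). Your pseudo-inverse remark for singular $\Sigma_k$, based on $\Lambda_k v=0$ whenever $\Sigma_k v=0$, correctly handles a case the statement leaves implicit by writing $\Sigma_k^{-1}$.
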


As a consequence, we may restrict attention to Gaussian state measures and Gaussian--affine feedback policies of the form
\[
u_k \sim K_k(x_k-m_k)+v_k+w_k,\qquad w_k\sim \calN(0,\Sigma^u_k).
\]
\subsection{Separation of the optimization}
\label{subsec:finite_dim_formulation}

Under the above restriction, the UDC problem is equivalently written as the following optimization over
$c$, the mean trajectory $\{m_k\}_{k}$, the covariance trajectory $\{\Sigma_k\}_{k}$, and the affine-feedback parameters $\{v_k,K_k,\Sigma^u_k\}_{k}$:
\begin{align}\label{eq:udc_reduced}
    &\min_{c,\{m_k,\Sigma_k,v_k,K_k,\Sigma^u_k\}} \,\,  c\Biggl[ \sum_{k=1}^{T-1}\Bigl(\|v_k\|^2 + \tr(K_k\Sigma_k K_k^\top) \nonumber\\
    & \qquad\qquad\qquad\qquad+ \tr(\Sigma^u_k)\Bigr) + \gamma M(m_1,m_T)  \nonumber\\
    & \qquad\qquad\qquad\qquad+ \gamma S(\Sigma_1,\Sigma_T) \Biggr] + \gamma \psi(c) \\
    &\qquad\,\, \text{s.t.}\,\,\, m_{k+1}=A m_k + B v_k,\nonumber\\
    & \qquad\qquad\,\,\Sigma_{k+1}=(A+BK_k)\Sigma_k(A+BK_k)^\top + B\Sigma^u_kB^\top \nonumber\\
    &\qquad\qquad\,\,k=1,\dots,T-1.\nonumber
\end{align}

Here, \(M, S\) are given by
\begin{align}
    &M(m_1,m_T)
    :=\frac12(m_1-m_\alpha)^\top \Sigma_\alpha^{-1}(m_1-m_\alpha) \nonumber\\
    &\qquad\qquad\qquad+\frac12(m_T-m_\beta)^\top \Sigma_\beta^{-1}(m_T-m_\beta),
    \label{eq:M_def}\\
    &S(\Sigma_1,\Sigma_T) :=\frac12\Bigl\{\tr(\Sigma_\alpha^{-1}\Sigma_1)+\tr(\Sigma_\beta^{-1}\Sigma_T)\nonumber\\
    &\qquad\qquad\qquad-\log|\Sigma_1|-\log|\Sigma_T|\Bigr\},\label{eq:S_def}
\end{align}
and $\psi(c)$ is the same as the UOT case~\eqref{eq:def-of-psi}.

As for UOT, although the problem \eqref{eq:udc_reduced} is not jointly convex, we can implement the  optimization over \(c\) and the one over the remaining variables separately. We introduce the following subproblem.
\begin{align}\label{eq:udc_subproblem}
    &\min_{\{m_k,\Sigma_k,v_k,K_k,\Sigma^u_k\}} \,\,   \sum_{k=1}^{T-1}\Bigl(\|v_k\|^2 + \tr(K_k\Sigma_k K_k^\top) \nonumber\\
    & \qquad\qquad\qquad\qquad+ \tr(\Sigma^u_k)\Bigr) + \gamma M(m_1,m_T)  \nonumber\\
    & \qquad\qquad\qquad\qquad+ \gamma S(\Sigma_1,\Sigma_T)  \\
    &\qquad\,\, \text{s.t.}\,\,\, m_{k+1}=A m_k + B v_k,\nonumber\\
    & \qquad\qquad\,\,\Sigma_{k+1}=(A+BK_k)\Sigma_k(A+BK_k)^\top + B\Sigma^u_kB^\top \nonumber\\
    &\qquad\qquad\,\,k=1,\dots,T-1.\nonumber
\end{align}

Problem~\eqref{eq:udc_subproblem} is not jointly convex in $(K_k,\Sigma_k,\Sigma_k^u)$.
Using the standard covariance steering reparametrization~\cite{Balci2022}, the covariance recursion becomes affine and the remaining constraint can be written as an LMI via the Schur complement. Hence, the problem reduces to an SDP:
\begin{align}\label{eq:sdp_subproblem}
    &\min_{\{m_k,\Sigma_k,v_k,S_k,Y_k\}} \,\,  \sum_{k=1}^{T-1}\Bigl(\|v_k\|^2 + \tr(Y_k)\Bigr) + \gamma M(m_1,m_T)  \nonumber\\
    &\qquad\qquad\qquad\qquad\,+ \gamma S(\Sigma_1,\Sigma_T)\\
    &\qquad\,\, \text{s.t.}\,\,\, m_{k+1}=A m_k + B v_k,\nonumber\\
    & \qquad\qquad\,\Sigma_{k+1} = A\Sigma_k A^\top + BS_kA^\top + AS_k^\top B^\top + BY_kB^\top \nonumber\\
    &\qquad\qquad\,k=1,\dots,T-1.\nonumber
\end{align}

As in the UOT case, we can solve the UDC problem~\eqref{eq:udc} globally by an two-stage procedure which is analogous to Algorithm~\ref{alg:uot}. 



\section{Numerical Simulation}\label{section:simmulation}

We illustrate the proposed UOT algorithm on a one-dimensional example with balanced references ($c_\alpha=c_\beta$). Figure~\ref{fig:optimal-plan} compares the optimal transport plans for $\gamma=0.2$ and $\gamma=30$ with the standard OT plan balanced case. As $\gamma$ increases, the solution approaches the
standard OT plan, and the optimal marginals $\pi_1^\ast,\pi_2^\ast$ become closer to the references
$\alpha,\beta$, as expected from the objective function~\eqref{eq:uot}. Also, Focusing on the first term in the UOT objective, transporting mass over short distances incurs a small cost, whereas transporting it over long distances is expensive. Hence, the transport cost associated with a transport plan is low near the diagonal and increases as one moves away from it. For small $\gamma$, it is therefore expected that the optimal solution prioritizes reducing the transport cost rather than matching the marginals to the reference measures. Consistent with this intuition, the optimal transport plan for $\gamma=0.2$ concentrates along the diagonal.

\section{Conclusion}
In this article, we study the UOT problem for Gaussian measures, which enables the comparison of distributions with different total masses, and we propose an algorithm that computes a global solution. We then formulate the UDC problem as a dynamical extension of UOT and derive an analogous global solution method. Future work includes extending the framework to non-Gaussian reference measures and to other divergence choices for quantifying discrepancies between measures. Applications to real-world dynamical systems are also left for future investigation.


\bibliography{reference}             
                                                   







\end{document}